\begin{document}
\setlength{\textwidth}{6.0in}
\setlength{\textheight}{9.0in}

\setlength{\oddsidemargin}{0.0cm}
\setlength{\evensidemargin}{0.0cm}
\setlength{\topmargin}{0.0cm}

 \newtheorem{theorem}{Theorem}[section]
 \newtheorem{lemma}[theorem]{Lemma}
 \newtheorem{proposition}[theorem]{Proposition}
 \newtheorem{question}[theorem]{Question}
 \newtheorem{corollary}[theorem]{Corollary}
 \newtheorem{definition}[theorem]{Definition}
 \newenvironment{proof}{\begin{trivlist} \item[]{\em Proof.}}{\end{trivlist}}
 \newcount\refno
 \refno=0


 \def\Ab{{\mathbf A}}
 \def\ab{{\mathbf a}}
 \def\bb{{\mathbf b}}
 \def\eb{{\mathbf e}}
 \def\kb{{\mathbf k}}
 \def\bo{{\mathbf o}}
 \def\pb{{\mathbf p}}
 \def\rb{{\mathbf r}}
 \def\sb{{\mathbf s}}
 \def\tb{{\mathbf t}}
 \def\ub{{\mathbf u}}
 \def\vb{{\mathbf v}}
 \def\xb{{\mathbf x}}
 \def\yb{{\mathbf y}}
 \def\zb{{\mathbf z}}
 \def\Kb{{\mathbf K}}
 \def\Pb{{\mathbf P}}
 \def\CA{{\mathcal A}}
 \def\CB{{\mathcal B}}
 \def\CD{{\mathcal D}}
 \def\CF{{\mathcal F}}
 \def\CI{{\mathcal I}}
 \def\CL{{\mathcal L}}
 \def\CH{{\mathcal H}}
 \def\CM{{\mathcal M}}
 \def\CO{{\mathcal O}}
 \def\CP{{\mathcal P}}
 \def\CS{{\mathcal S}}
 \def\CV{{\mathcal V}}
\def\CC{{\mathbb C}}
\def\DD{{\mathbb D}}
\def\AA{{\mathbb A}}
\def\BB{{\mathbb B}}
 \def\PP{{\mathbb P}}
 \def\RR{{\mathbb R}}
 \def\NN{{\mathbb N}}
 \def\SB{{\mathbb S}}
 \def\TT{{\mathbb T}}
 \def\ZZ{{\mathbb Z}}
\def\SD{{\mathscr D}}
 \def\SF{{\mathscr F}}
 \def\SH{{\mathscr H}}
  \def\SM{{\mathscr M}}
 \def\SS{{\mathscr S}}
        \def\proj{\operatorname{proj}}
        \def\loc{\operatorname{loc}}
        \def\const{\operatorname{const}}


 \def\thefootnote{}
 \title{\bf Kakeya  inequalities by maximal functions in  Hardy spaces
 \thanks{\indent\,\, Department of Mathematics, Capital Normal University, Beijing 100048, China,
 E-mail: huzhuoran010@163.com.}}

\author{{ZhuoRan Hu}}

 \maketitle \setcounter{page}{1} \pagestyle{myheadings}
 \markboth{Hu}{Kakeya  inequalities by  maximal functions in  Hardy spaces  }

 \begin{abstract}
In this paper, we will introduce and study several types of Kakeya inequalities  by the maximal functions in  Hardy spaces in $\RR^n$,\,$(n\geq2)$, and we could obtain several  inequalities associated with the Kakeya inequalities. We will show that $\big\|M^t_{\delta S_{\alpha,\beta}} f \big\|_p\lesssim_{p,n,\varphi,\varepsilon}\left(\frac{1}{\delta}\right)^{5\left(\frac{n}{r}+2\right)\varepsilon}\big\|f\big\|_p$, when $f(x)\in L^p(\RR^n)$  and $supp\,\hat{f}(\xi)\subseteq B(0,1)$.

 \vskip .2in
 \noindent
 {\bf 2000 MS Classification:} 42B20, 42B25, 42A38.
 \vskip .2in
 \noindent
 {\bf Key Words:}  Kakeya type inequalities, Maximal function, Fourier transform.
 \end{abstract}

\setcounter{page}{1}

\section{Introduction }

In 1917, Kakeya\,\cite{SK}  proposed a problem to determine the minimal area needed to continuously rotate a unit line segment in the plane by 180 degrees. In 1928, Besicovitch\,\cite{BS} proved the measure of such sets could be arbitrary small. Such sets are called Besicovitch Sets or Kakeya Sets. The Kakeya conjectures states that the Hausdorff dimension of any Besicovitch Sets in $\RR^n$ is n. The case for $n\geq3$ is still an open problem. The so-called maximal Kakeya conjecture (or maximal Nikodym conjecture) is actually a stronger one that involves the following Kakeya maximal function (or Nikodym maximal function):
\begin{eqnarray}\label{37}
f_\delta^*(\xi)=\sup_{a\in\RR^n}\frac{1}{|T_\xi^{\delta}(a)|}\int_{T_\xi^{\delta}(a)}|f(y)|dy,
\end{eqnarray}
where $T_\xi^{\delta}(a)$ is a $1\times\delta$ tube centered at $a\in \RR^n$ with the direction $\xi\in S^{n-1}$.
\begin{eqnarray}\label{38}
f_\delta^{**}(x)=\sup_{x\in T}\frac{1}{|T|}\int_{T}|f(y)|dy,
\end{eqnarray}
where the supremum is taken over all $1\times\delta$ tubes $T$ that contain $x\in\RR^n$. Formula\,(\ref{37}) is Kakeya maximal function and
Formula\,(\ref{38}) is Nikodym maximal function.
When $n=2$, in \cite{Co}, Cordoba proved that for any $\varepsilon>0$ $$\|f_\delta^*\|_{L^{2}(S^1)}\lesssim_{\varepsilon} \delta^{-\varepsilon}\|f\|_{L^{2}(\RR^{2})}.$$
The Kakeya maximal function conjecture is formulated by Bourgain\,\cite{Du3} that
\begin{eqnarray}\label{39}
\|f_\delta^*\|_{L^{p}(S^{n-1})}\lesssim_{\varepsilon} \delta^{-\varepsilon}\|f\|_{L^{p}(\RR^{n})}
\end{eqnarray}
holds for $p\geq n$ and $n\in\NN$, and
\begin{eqnarray}\label{40}
\|f_\delta^*\|_{L^{q}(S^{n-1})}\lesssim_{\varepsilon} \delta^{-\frac{n}{p}+1-\varepsilon}\|f\|_{L^{p}(\RR^{n})}
\end{eqnarray}
holds for $1<p\leq n$, $q=(n-1)p'$ and $n\in\NN$. In 1983, Drury proved Formula\,(\ref{40}) for $p=(d+1)/2$, $q=n+1$ in \cite{SD}. In 1991, Bourgain in
\cite{Du3} improved this result for each $n\geq3$ to some $p(d)\in((d+1)/2, (d+2)/2)$.
By the interpolation theory,
(see \cite{x,y,z} and reference therein)
\begin{eqnarray}\label{44}
\|f_\delta^*\|_{L^{p}(\RR^{n})}\lesssim_{\varepsilon} \delta^{-\frac{n-1}{p}-\varepsilon}\|f\|_{L^{p}(\RR^{n})}
\end{eqnarray}
holds for $p\geq n$ and $n\in\NN$.

 \textbf{Main result}:  Inspired by the Formulas\,(\ref{37},\,\ref{38},\,\ref{39},\,\ref{40},\,\ref{44}), we will consider maximal functions like  $M_{\delta S_{\alpha,\beta}} f(x)$ and $M^t_{\delta S_{\alpha,\beta}} f(x)$ in this paper. Notice that the classical case is $\delta=1$, then $M_{1 S_{\alpha,\beta}} f(x)$ and $M^t_{1 S_{\alpha,\beta}} f(x)$ are classical maximal functions in Hardy spaces. And
 $$\big\|M_{1 S_{\alpha,\beta}} f \big\|_p\leq C \big\|(f\ast\varphi)_\bigtriangledown\big\|_p,\ \ \big\|M_{1 S_{\alpha,\beta}} f \big\|_p\leq C\|f\|_p.$$
for some constant $C>0$.

 We will obtain several  inequalities in Proposition\,\ref{25},\, Theorem\,\ref{35} and Theorem\,\ref{50}. In Proposition\,\ref{25}, though  the coefficient  in Formula\,(\ref{42})  is not better than the  factor $\delta^{-\frac{n-1}{p}-\varepsilon}$ in Formula\,(\ref{44})£¬ but we use a  way   different to \cite{Du3}, \cite{Co} and \cite{SD}.  And we could obtain  Formula\,(\ref{41})   which is different to the classical case $\delta=1$. In Theorem\,\ref{35}, the coefficient is  the same as the factor  $\delta^{-\varepsilon}$ in Formula\,(\ref{39}) when $supp\,\hat{f}(\xi)\subseteq B(0,1)$. In  Theorem\,\ref{50}, the coefficient is  independent on $\delta$ when  $supp\,\hat{f}(\xi)\subseteq \left(B(0,t^{-1}\delta^{-(1+4\varepsilon)})\right)^c$.\\

\textbf{Notation}: As usual, we  use $n$ to denote the dimension of  $\RR^n$. $supp\,f(x)$  is the support set of $f(x)$. If $x\in\RR^n$: $x=(x_1,x_2,\cdots,x_n)$, $|x|_e$ denotes  $|x|_e=\sqrt{x_1^2+x_2^2+\cdots+x_n^2}.$
For $\alpha\in\NN^n$:$\alpha=(\alpha_1,\alpha_2,\cdots,\alpha_n)$,  $|\alpha|$  denotes  $|\alpha|=\alpha_1+\alpha_2+\cdots+\alpha_n.$ We use $\|.\|_{p}$ to denote $\|.\|_{L^p(\RR^n)}$, and $O(\RR^n)$ to denote the $n\times n$ unit orthogonal matrix in $\RR^n$:
$O(\RR^n)=\{A:  A^{T}A=1. \,A^{T}\,\hbox{is\,the\,transposed\,matrix\,of}\,A\}.$
 $S(\RR^n)$ designates the space of $C^{\infty}$ functions on $\RR^n$ rapidly decreasing together with their derivatives. $S_{\alpha,\beta}(\RR^n)$ denotes :
$S_{\alpha,\beta}(\RR^n)=\{\phi\in S(\RR^n):  \|\phi\|_{\alpha', \beta'}\leq1,\,\, \forall\alpha',\beta'\in\NN^n, |\alpha'|\leq |\alpha|,|\beta'|\leq |\beta|\}$, and $\varepsilon$   a positive fixed number (may be very small):  $\varepsilon>0$.

If $X$ and $Y$ are two quantities,  $X\lesssim Y$ or $Y\gtrsim X$ denotes that $X\leq CY$
for some absolute constant $C>0$.  More generally, given some
parameters $a_1,\cdots,a_k$, we use $X\lesssim_{a_1,\cdots,a_k} Y$ or $Y\gtrsim_{a_1,\cdots,a_k} X$ to denote the statement that $X\leq C_{a_1,\cdots,a_k}Y$ for some constant $C_{a_1,\cdots,a_k}$ which can depend on the parameter $a_1,\cdots,a_k$. We use
$X\sim Y$ to denote the statement $X\lesssim Y\lesssim X$, and similarly $X\sim_{a_1,\cdots,a_k} Y$ denotes $X\lesssim_{a_1,\cdots,a_k} Y\lesssim_{a_1,\cdots,a_k} X$.

\section{Preliminaries}
For $t,\xi\ \in\RR^n$, $f\in S(\RR^n)$, the Fourier transform of $f$ is given by
$$\hat{f}(\xi)=\mathfrak{F}f(\xi)=\int_{\RR^n} f(t)e^{-2\pi i <\xi, t>}dt,$$
thus
$f(x)=(\hat{f})^{\vee}(x),$
where $<\xi, t>=\sum_{k=1}^{n} \xi_k t_k$ and $\vee$ is the Inversion of Fourier transform. For $g\in S(\RR^n)$, $g_I(x)$  designates
$$g_I(x)=\left(\frac{1}{\delta}\right)^{n-1}g\left(\frac{x_1}{\delta}, \frac{x_2}{\delta}, \cdots,\frac{x_{n-1}}{\delta}, x_{n} \right).$$
Let $u=(u_1,u_2,\cdots,u_n)=xA^{-1}=(x_1,x_2,\cdots,x_n)A^{-1}$ where $A$ is a variable (not fixed) matrix with $ A\in O(\RR^n)$,  then $g_{AI}(x)$ is given by
\begin{eqnarray*}
g_{AI}(x)=g_I\left(xA^{-1}\right)
=g_I\left(u\right)
=\left(\frac{1}{\delta}\right)^{n-1}g\left(\frac{u_1}{\delta}, \frac{u_2}{\delta}, \cdots,\frac{u_{n-1}}{\delta}, u_{n} \right).
\end{eqnarray*}
If $A$ is a variable (not fixed) matrix with $ A\in O(\RR^n)$, let
\begin{eqnarray*}
\zeta=\left(\begin{array}{c}
\zeta_1 \\
\zeta_2 \\
\vdots \\
\zeta_n
\end{array} \right)=A\xi=A\left(\begin{array}{c}
\xi_1 \\
\xi_2 \\
\vdots \\
\xi_n
\end{array} \right),
\end{eqnarray*}
thus
$$\mathfrak{F}(g_I)(\xi)=\mathfrak{F}(g)(\delta\xi_{1},\delta\xi_{2},\ldots,\delta\xi_{n-1}, \xi_n),$$
and
$$\mathfrak{F}(g_{AI})(\xi)=\mathfrak{F}(g_{I})(A\xi)=\mathfrak{F}(g_{I})(\zeta)=\mathfrak{F}(g)(\delta\zeta_{1},\delta\zeta_{2},\ldots,\delta\zeta_{n-1}, \zeta_n).$$
In this paper, let $\varphi \in S(\RR^n)$ always to be  a fixed radial function satisfying the following:
\begin{align*}\left\{\begin{array}{lll}
\widehat{\varphi}(\xi)=1,\ \hbox{for}\  |\xi|_e\leq1,\\\\
\widehat{\varphi}(\xi)=0,\ \hbox{for}\  |\xi|_e\geq2,\\\\
\widehat{\varphi}(A\xi)=\widehat{\varphi}(\xi)\ \hbox{for}\ A\in O(\RR^n).
\end{array}\right.
\end{align*}
$M_{\Upsilon}f(x)$ and $M_{S_{\alpha,\beta}}f(x)$ are given by
$M_{\Upsilon}f(x)=\sup_{t>0}|(f\ast \Upsilon_t)(x)|,\ M_{S_{\alpha,\beta}}f(x)=\sup_{\Upsilon\in S_{\alpha,\beta}(\RR^n)}M_{\Upsilon}f(x).$
And non-tangential maximal functions $(f\ast\Upsilon)_\bigtriangledown(x)$ is defined as usual: $(f\ast\Upsilon)_\bigtriangledown(x)=\sup_{|x-y|\leq t}|(f\ast \Upsilon_t)(y)|.$
The even larger tangential variant $M_{\Upsilon N}^{\ast\ast}$ depending on a parameter N is given by:
$$M_{\Upsilon N}^{\ast\ast}f(x)=\sup_{v\in\RR^n,t>0}\left|\int_{\RR^n}f(u)\frac{1}{t^n}\Upsilon\left(\frac{x-u-v}{t}\right)\left(1+\frac{|v|}{t}\right)^{-N}du \right|.$$
Let $f$ to be a distribution, Hardy spaces $H^p(\RR^n)$ are (c.f.\cite{Du2}):  $\|f\|_{H^p(\RR^n)}= \|M_{S_{\alpha,\beta}}f\|_{L^p(\RR^n)}$, for $0<p<\infty$ with appropriate $\alpha$ and $\beta$ depending on $p$. It is known that $H^p=L^p$ for $p>1$: $\|f\|_{H^p(\RR^n)}= \|f\|_{L^p(\RR^n)}.$\\
In this paper, the Kakeya type maximal function $M_{\delta S_{\alpha,\beta}} f(x)$ is  given by
\begin{eqnarray}\label{1OO}
M_{\delta S_{\alpha,\beta}} f(x)= \sup_{t>0, A\in O(\RR^n), \Upsilon\in S_{\alpha,\beta}(\RR^n)}\left|\int f(x-y)\Upsilon_{I_t}(yA^{-1})dy\right|,
\end{eqnarray}
where $\Upsilon_{AI_t}(y)=\Upsilon_{I_t}(yA^{-1})=\frac{1}{t^n}\Upsilon_{I}\left(\frac{yA^{-1}}{t}\right).$
For some  fixed $t>0$, $M^t_{\delta S_{\alpha,\beta}} f(x)$ can be defined by
\begin{eqnarray}\label{2OO}
M^t_{\delta S_{\alpha,\beta}} f(x)= \sup_{ A\in O(\RR^n), \Upsilon\in S_{\alpha,\beta}(\RR^n)}\left|\int f(x-y)\Upsilon_{I_t}(yA^{-1})dy\right|.
\end{eqnarray}

\begin{lemma}\cite{Du2}\label{oooh1}
For any $\psi\in S(\RR^n)$,  $1<p<\infty$, $f\in L^p(\RR^n)$, $\displaystyle{ N>\frac{n}{p}}$, we could obtain:
$$\|M_{\psi N}^{\ast\ast}f\|_p\lesssim_{N, p}   \|(f\ast\psi)_\bigtriangledown\|_p \lesssim_{p,\psi}   \|f\|_p.$$

\end{lemma}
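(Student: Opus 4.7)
My plan is to establish the two inequalities separately, in each case reducing to the Hardy--Littlewood maximal inequality through a pointwise comparison (with $M$ denoting the classical Hardy--Littlewood maximal operator on $\RR^n$). For the right-hand estimate $\|(f\ast\psi)_\bigtriangledown\|_p\lesssim_{p,\psi}\|f\|_p$, I would use that for $|x-y|\leq t$ the triangle inequality $|y-u|\geq|x-u|-t$ combined with the Schwartz decay of $\psi$ gives
$$|\psi_t(y-u)|\lesssim_{\psi,N'} t^{-n}\Bigl(1+\frac{|x-u|}{t}\Bigr)^{-N'}$$
for any $N'>n$. Hence $|(f\ast\psi_t)(y)|$ is majorized by the convolution of $|f|$ with a radial, radially decreasing $L^1$ function centered at $x$ and independent of $y$, so the standard decreasing-radial-majorant lemma yields $(f\ast\psi)_\bigtriangledown(x)\lesssim_\psi Mf(x)$, and the $L^p$-boundedness of $M$ for $1<p<\infty$ closes this step.

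For the harder left-hand inequality, I would follow Stein's tangential-to-nontangential scheme by establishing a pointwise bound
$$M_{\psi N}^{\ast\ast}f(x)\lesssim \bigl(M\bigl((f\ast\psi)_\bigtriangledown^r\bigr)(x)\bigr)^{1/r}$$
for a parameter $r$ in the range $n/N<r<p$; this range is nonempty exactly because $N>n/p$. Once this is in hand, applying the Hardy--Littlewood inequality on $L^{p/r}$ (valid since $p/r>1$) and taking $r$-th roots would give the required $L^p$ control of $M_{\psi N}^{\ast\ast}f$. To derive the pointwise bound, I would fix $(s,t)$ and set $y=x-s$; for every $z\in B(y,t)$ the pair $(y,t)$ is admissible in the nontangential supremum at $z$, so $|(f\ast\psi_t)(x-s)|\leq(f\ast\psi)_\bigtriangledown(z)$. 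Raising to the $r$-th power, averaging over $B(y,t)$, and enlarging the domain to $B(x,|s|+t)\supset B(y,t)$ produces
$$|(f\ast\psi_t)(x-s)|^r\lesssim\Bigl(\frac{|s|+t}{t}\Bigr)^n M\bigl((f\ast\psi)_\bigtriangledown^r\bigr)(x).$$
Taking $r$-th roots and multiplying by the tangential weight $(1+|s|/t)^{-N}$ leaves a prefactor $((|s|+t)/t)^{n/r-N}$, which is bounded uniformly in $(s,t)$ precisely when $N\geq n/r$.

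The main obstacle, and the reason for the hypothesis $N>n/p$, is the tangential regime $|s|\gg t$: the volume cost of replacing a center value by an average over a small ball contained in a much larger one is $(|s|/t)^n$, and this must be beaten by the decay factor $(1+|s|/t)^{-N}$. This forces one to work with a subcritical exponent $r<p$, and the availability of such $r$ simultaneously satisfying $r>n/N$ is exactly what $N>n/p$ guarantees. A subtler point I would check carefully is that the passage from the center-point bound $|(f\ast\psi_t)(x-s)|\leq(f\ast\psi)_\bigtriangledown(z)$ to the averaged $L^r$-estimate is uniform over $s$ and $t$, so that the final supremum over $(s,t)$ in the definition of $M_{\psi N}^{\ast\ast}f$ is genuinely controlled by a single Hardy--Littlewood maximal term.
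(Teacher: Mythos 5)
The paper does not prove this lemma; it quotes it from Stein's book, so there is no internal argument to compare against. Your proof is precisely the standard Fefferman--Stein argument from that source --- domination of $(f\ast\psi)_\bigtriangledown$ by $Mf$ via a radially decreasing majorant, and the tangential-to-nontangential comparison $M_{\psi N}^{\ast\ast}f\lesssim\bigl(M\bigl((f\ast\psi)_\bigtriangledown^{r}\bigr)\bigr)^{1/r}$ with $n/N<r<p$, which exists exactly when $N>n/p$ --- and it is correct.
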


\begin{lemma}\cite{Du5}\label{oooh2}
Let   $0<C_0<\infty$ and $0<r<\infty$. Then there exist constants $C_1$ and $C_2$(that depend only on n, $C_0$ and r) such that for all $t>0$ and  for all $C^1(\RR^n)$ functions u on $\RR^n$ whose Fourier transform is supported in the ball $|\xi|_e\leq C_0t$ and that satisfies $|u(z)|\leq B(1+|z|_e)^{n/r}$ for some $B>0$, we have the estimation
$$\sup_{z\in\RR^n}\frac{1}{t}\frac{|\nabla u(x-z)|}{(1+t|z|_e)^{n/r}}\leq C_1\sup_{z\in\RR^n}\frac{| u(x-z)|}{(1+t|z|_e)^{n/r}}\leq C_2 \left(M(|u|^r)(x)\right)^{1/r},$$
 where M denotes the Hardy-Littlewood maximal operator.(The constants $C_1$ and $C_2$ are independent of B and u.)

\end{lemma}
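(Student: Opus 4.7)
The plan is to reduce to the case $t=1$ by rescaling $u \to u(\cdot/t)$, which transforms the Fourier support condition to $|\xi|_e\leq C_0$ and absorbs all occurrences of $t$. Once $t=1$ is fixed, the two inequalities are established by different mechanisms: the first via a reproducing formula for band-limited functions, and the second via the mean value theorem combined with an absorption argument.

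For the first inequality, I would pick a Schwartz function $\phi$ with $\widehat{\phi}$ equal to $1$ on $|\xi|_e\leq C_0$ and supported in $|\xi|_e\leq 2C_0$. Since $\widehat{u}$ is supported in $|\xi|_e\leq C_0$, we have $u=u\ast\phi$ and hence $\nabla u = u\ast\nabla\phi$. This gives the pointwise bound
$$|\nabla u(x-z)|\leq \int|u(x-w)|\,|\nabla\phi(w-z)|\,dw.$$
Using the rapid decay $|\nabla\phi(w-z)|\lesssim_N (1+|w-z|)^{-N}$ together with the elementary inequality $(1+|w|)\leq(1+|z|)(1+|w-z|)$, one bounds $|u(x-w)|\leq (1+|w|)^{n/r}\sup_{w'}|u(x-w')|/(1+|w'|)^{n/r}$, pulls out the factor $(1+|z|)^{n/r}$, and reduces matters to the finiteness of $\int(1+|w-z|)^{n/r-N}dw$, which holds for $N>n+n/r$. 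The first inequality follows.

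For the second inequality, set $P(x):=\sup_z|u(x-z)|/(1+|z|)^{n/r}$. For a small parameter $\delta\in(0,1]$ to be chosen, and any $w$ with $|w-z|\leq\delta$, the mean value theorem yields $|u(x-z)|\leq|u(x-w)|+\delta\sup_{|\xi-z|\leq\delta}|\nabla u(x-\xi)|$; invoking the first inequality and noting that $(1+|\xi|)\leq 2(1+|z|)$ on the relevant neighborhood, one deduces
$$|u(x-z)|\leq |u(x-w)|+C\delta(1+|z|)^{n/r}P(x).$$
Raising to the $r$-th power, averaging over $w\in B(z,\delta)$, and comparing the ball $B(x-z,\delta)$ with the larger ball $B(x,|z|+\delta)$ (which contains $x$) leads to
$$|u(x-z)|^r\lesssim \frac{(1+|z|)^n}{\delta^n}\,M(|u|^r)(x)+C\delta^r(1+|z|)^n P(x)^r.$$
Dividing by $(1+|z|)^n$ and taking the supremum over $z$ gives $P(x)^r\leq C_\delta M(|u|^r)(x)+C\delta^r P(x)^r$. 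Choosing $\delta$ small enough that $C\delta^r<1/2$ allows one to absorb the last term, producing the second inequality.

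The main obstacle is the absorption step: to subtract $C\delta^r P(x)^r$ from both sides one needs $P(x)<\infty$ a priori, which is precisely what the growth hypothesis $|u(z)|\leq B(1+|z|)^{n/r}$ is meant to guarantee (the hypothesis forces $P(x)$ to be finite pointwise, with a bound that initially depends on $B$, but $B$ does not appear in the final constants). Aside from this absorption and the standard Schwartz decay of $\phi$ and $\nabla\phi$, the rest is routine exponent bookkeeping.
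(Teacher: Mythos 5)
The paper does not prove this lemma at all---it is quoted verbatim from the cited reference (Grafakos, \emph{Classical and Modern Fourier Analysis})---so there is no internal proof to compare against; your argument is correct and is essentially the standard proof given in that reference: rescale to $t=1$, obtain the gradient bound from the reproducing identity $u=u\ast\phi$ with $\widehat{\phi}=1$ on the Fourier support plus the inequality $(1+|w|)^{n/r}\leq(1+|z|)^{n/r}(1+|w-z|)^{n/r}$, and obtain the maximal-function bound by the mean value theorem, averaging $|u|^{r}$ over a small ball, and absorbing the term $C\delta^{r}P(x)^{r}$, with the growth hypothesis $|u(z)|\leq B(1+|z|)^{n/r}$ used exactly where you say, to guarantee $P(x)<\infty$ so that absorption is legitimate. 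No gaps.
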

\begin{lemma}\label{11}[Phragmen-Lindel\"{o}f Lemma]
Let F be analytic in the open strip $S=\{z\in\CC: 0<Rez<1\}$, continuous and bounded on its closure, such that $|F(z)|\leq C_0$ when
$Rez =0$ and $|F(z)|\leq C_1$ when $Rez =1$. Then $|F(z)|\leq C_0^{1-\theta}C_1^{\theta}$ when $Rez= \theta$ for any $0<\theta<1$.
\end{lemma}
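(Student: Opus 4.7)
The plan is to reduce the lemma to the standard three-lines theorem by introducing an auxiliary entire function that is bounded by $1$ on the two boundary lines, and then to handle the unboundedness of the strip by multiplying through by a Gaussian-type correction factor.

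First I would assume without loss of generality that $C_0, C_1 > 0$ (the case where either constant is zero follows from the positive case by replacing $C_j$ with $C_j + \eta$ and sending $\eta \to 0^+$, using boundedness of $F$ on $\overline{S}$). Then define
\[
G(z) = F(z)\, C_0^{z-1}\, C_1^{-z},
\]
where $C_0^{z-1} = \exp((z-1)\log C_0)$ and $C_1^{-z} = \exp(-z \log C_1)$ use the real branches of the logarithm. This $G$ is analytic in $S$, continuous and bounded on $\overline{S}$, and a direct computation on $\mathrm{Re}\,z = 0$ gives $|G(z)| = |F(z)|\,C_0^{-1} \leq 1$, while on $\mathrm{Re}\,z = 1$ one has $|G(z)| = |F(z)|\,C_1^{-1} \leq 1$.

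The remaining task is to upgrade these two boundary bounds to the whole strip; the obstacle is that $S$ is unbounded, so the maximum principle does not apply directly. The standard fix is to introduce, for $\varepsilon > 0$, the function
\[
G_\varepsilon(z) = G(z)\, e^{\varepsilon(z^2 - 1)}.
\]
Writing $z = x + iy$, one has $|e^{\varepsilon(z^2-1)}| = e^{\varepsilon(x^2 - 1 - y^2)} \leq e^{-\varepsilon y^2}$ on $\overline{S}$. Hence $|G_\varepsilon| \leq 1$ on $\partial S$, and $|G_\varepsilon(z)| \to 0$ uniformly in $x \in [0,1]$ as $|y| \to \infty$ because $G$ is bounded. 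I would then choose $R$ so large that $|G_\varepsilon(z)| \leq 1$ on the horizontal segments $\mathrm{Im}\,z = \pm R$ of the rectangle $[0,1] \times [-R,R]$, apply the usual maximum modulus principle on that compact rectangle, and conclude $|G_\varepsilon(z)| \leq 1$ throughout $S$.

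Finally, sending $\varepsilon \to 0^+$ yields $|G(z)| \leq 1$ on $S$, which rewrites as $|F(z)| \leq C_0^{1-x}\,C_1^{x}$ for $z = x + iy$. Specializing to $x = \theta$ gives the claimed inequality $|F(z)| \leq C_0^{1-\theta} C_1^\theta$. The only step that requires real care is the Gaussian correction: one must verify the decay in $y$ really does dominate the growth of $G$ and that no branch issues arise in defining the complex powers of $C_0$ and $C_1$; everything else is a direct application of the maximum principle.
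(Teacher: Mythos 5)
Your proof is correct and is the standard three-lines argument: normalize by $C_0^{z-1}C_1^{-z}$, kill the unboundedness of the strip with the factor $e^{\varepsilon(z^2-1)}$, apply the maximum modulus principle on large rectangles, and let $\varepsilon\to0^+$ (with the harmless $C_j+\eta$ perturbation handling degenerate constants). The paper states this lemma as a classical fact and supplies no proof of its own, so there is nothing to contrast with; your argument is complete and fills that gap correctly.
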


\section{The Case When $2\leq\delta^{-\varepsilon}$}
When $1\leq\delta^{-\varepsilon}\leq2$,  the case that $\delta\sim_{\varepsilon}1$ is trival for the Kakeya type inequalities, thus we only want to discuss the case when $0<\delta\ll1$.
In the following of this  paper, we will discuss under the assumption  that $2\leq\delta^{-\varepsilon}$.

\subsection{Decomposition of the Phase Space}
In this section, we will  decompose $\RR^n$ into a collection of regions:$$\{\xi\in\RR^n: \delta^{-(k-1)\varepsilon}\leq|\xi|_e\leq\delta^{-(k+3)\varepsilon} \}_{k\geq1, k\in\ZZ}\,\hbox{and}\ \ \{\xi\in\RR^n: |\xi|_e\leq1 \}.$$
Then we will give a decomposition of the region $\{\xi\in\RR^n: |\xi|_e\leq1 \}\bigcup\{\xi\in\RR^n: 1\leq|\xi|_e\leq\delta^{-4\varepsilon}\}$ into a collection of smaller ones:
$$\{\xi\in\RR^n: 2^{k-1}\leq|\xi|_e\leq2^{k+3} \}_{s\geq k\geq1, k\in\ZZ}\,\hbox{and}\ \ \{\xi\in\RR^n: |\xi|_e\leq1 \}.$$
Let the functions $\{\widehat{\Phi_k}(\xi)\}_k$ for $k\in\ZZ, k\geq0$ to be defined as:
\begin{align*}\left\{\begin{array}{lll}
\widehat{\Phi_0}(\xi)=\widehat{\varphi}(\xi),\ \ \ \ & &\Phi_0(x)=\varphi(x),\\\\
\widehat{\Phi_k}(\xi)=\widehat{\varphi}(2^{-k}\xi)-\widehat{\varphi}(2^{1-k}\xi),\ \ \ & &\Phi_k(x) = \varphi_{2^{-k}}(x)-\varphi_{2^{-(k-1)}}(x),\ \ \hbox{for}\  k\geq1.
\end{array}\right.
\end{align*}
Then the functions $\{\widehat{\mathbf{\Phi}_k}(\xi)\}_k$ for $k\in\ZZ, k\geq0$ are given by:
\begin{align*}\left\{\begin{array}{lll}
\widehat{\mathbf{\Phi}_0}(\xi)=\widehat{\Phi_0}(\delta\xi_{1},\delta\xi_{2},\ldots,\delta\xi_{n-1}, \xi_n),\ \ \   \mathbf{\Phi_0}(x)=(\Phi_0)_I(x),\\\\
\widehat{\mathbf{\Phi}_k}(\xi) = \widehat{\Phi_k}(\delta\xi_{1},\delta\xi_{2},\ldots,\delta\xi_{n-1}, \xi_n),\ \ \  \mathbf{\Phi_k}(x)=(\Phi_k)_I(x),\ \ \hbox{for}\  k\geq1.
\end{array}\right.
\end{align*}
Thus it is clear that
$supp\, \widehat{\Phi_k}(\xi)\subseteq\{\xi\in \RR^n: 2^{k-1}\leq|\xi|_e\leq2^{k+1}\} ,\ \ \hbox{for}\  k\geq1$
and
$supp\, \widehat{\mathbf{\Phi}_k}(\xi)\subseteq\{\xi\in \RR^n: 2^{k-1}\leq\left((\delta\xi_1)^2+\ldots+(\delta\xi_{n-1})^2+(\xi_n)^2\right)^{1/2}\leq2^{k+1}\}\ \ \hbox{for}\  k\geq1 .$
Also we could deduce that:
$$\sum_{k=0}^{\infty}\widehat{\mathbf{\Phi}_k}(\xi)=1,\ \ \ \hbox{with}\, \mathbf{\Phi_k}(x)=(\Phi_k)_I(x).$$
In the same way, we could define the functions $\{\Psi_k(x)\}_k$ and $\{\mathbf{\Psi}_k(x)\}_k$  for $k\in\ZZ, k\geq0$  as:
\begin{align*}\left\{\begin{array}{lll}
\widehat{\Psi_0}(\xi)=\widehat{\varphi}(\xi),\ \ & &\Psi_0(x)=\varphi(x),\\\\
\widehat{\Psi_k}(\xi)=\widehat{\varphi}(\delta^{k\varepsilon}\xi)-\widehat{\varphi}(\delta^{(k-1)\varepsilon}\xi),\ & &\Psi_k(x) = \varphi_{\delta^{k\varepsilon}}(x)-\varphi_{\delta^{(k-1)\varepsilon}}(x),\ \hbox{for}\  k\geq1,
\end{array}\right.
\end{align*}
\begin{align*}\left\{\begin{array}{lll}
\widehat{\mathbf{\Psi}_0}(\xi)=\widehat{\Psi_0}(\delta\xi_{1},\delta\xi_{2},\ldots,\delta\xi_{n-1}, \xi_n),\ \ \ \mathbf{\Psi_0}(x)=(\Psi_0)_I(x)\\\\
\widehat{\mathbf{\Psi}_k}(\xi) = \widehat{\Psi_k}(\delta\xi_{1},\delta\xi_{2},\ldots,\delta\xi_{n-1}, \xi_n),\ \ \ \mathbf{\Psi_k}(x)=(\Psi_k)_I(x),\ \ \hbox{for}\  k\geq1.
\end{array}\right.
\end{align*}
Then we could deduce that
$supp\, \widehat{\Psi_k}(\xi)\subseteq\{\xi\in \RR^n: \delta^{-(k-1)\varepsilon}\leq|\xi|_e\leq\delta^{-(k+3)\varepsilon}\} \ \ \hbox{for}\  k\geq1$
and
$supp\, \widehat{\mathbf{\Psi}_k}(\xi)\subseteq\{\xi\in \RR^n: \delta^{-(k-1)\varepsilon}\leq\left((\delta\xi_1)^2+\ldots+(\delta\xi_{n-1})^2+(\xi_n)^2\right)^{1/2}\leq\delta^{-(k+3)\varepsilon}\} $
for $k\geq1$ hold.
Thus we could have
$$\sum_{k=0}^{\infty}\widehat{\mathbf{\Psi}_k}(\xi)=1\,\,\,\hbox{with}\,\mathbf{\Psi_k}(x)=(\Psi_k)_I(x).$$
Notice that $\delta^{1+(k+3)\varepsilon}|\xi|_e\leq1$ for $\xi\in supp\,\widehat{\Psi_k}(\xi)$. Then we could obtain:
$$\widehat{\varphi}(\delta^{1+(k+3)\varepsilon}\xi)=1,\ \ \hbox{for}\ \xi\in supp\,\widehat{\Psi_k}(\xi),$$
and
$$\mathfrak{F}(\Upsilon_I)(\xi)=\sum_{k=0}^{\infty}\frac{\widehat{\mathbf{\Psi}_k}(\xi)}{\widehat{\varphi}(\delta^{1+(k+3)\varepsilon}\xi)}\mathfrak{F}(\Upsilon_I)(\xi)\widehat{\varphi}(\delta^{1+(k+3)\varepsilon}\xi).$$
We set $\widehat{\eta_1^k}(\xi)$,\,($\hbox{for}\,0\leq k, k\in\ZZ$) as:
$$\widehat{\eta_1^k}(\xi)=\frac{\widehat{\mathbf{\Psi}_k}(\xi)}{\widehat{\varphi}(\delta^{1+(k+3)\varepsilon}\xi)}\mathfrak{F}(\Upsilon_I)(\xi)=\widehat{\mathbf{\Psi}_k}(\xi)\mathfrak{F}(\Upsilon_I)(\xi).\,\ \ \ \hbox{for}\,0\leq k,\  k\in\ZZ.$$
It is easy to see that $\widehat{\eta_1^k}(\xi)\,\hbox{and}\,\eta_1^k(x)\in S(\RR^n)$.
$\exists s\in\NN$, such that $$\delta^{-4\varepsilon}\sim 2^{s},\ \ \ \hbox{and}\,\sum_{k=0}^{s}\,\widehat{\mathbf{\Phi}_k}(\xi)=1\,\hbox{for}\,\xi\in \,supp\widehat{\mathbf{\Psi}_0}\bigcup\,supp\widehat{\mathbf{\Psi}_1}.$$
We set $\widehat{\eta_0^k}(\xi)$ ($\,k=0,1,2,\cdots,s$) as:
$$\widehat{\eta_0^k}(\xi)=\frac{\left(\widehat{\mathbf{\Psi}_0}(\xi)+\widehat{\mathbf{\Psi}_1}(\xi)\right)\widehat{\mathbf{\Phi}_k}(\xi)}{\widehat{\varphi}(2^{-(k+1)}\delta\xi)}\mathfrak{F}(\Upsilon_I)(\xi).\,\ \ \ \hbox{for}\,k=0,1,2,\cdots,s.$$
Notice that $2^{-(k+1)}\delta|\xi|_e\leq1$ holds, when $\xi\in supp\,\widehat{\eta_0^k}(\xi)$. Thus we could obtain:
$$\widehat{\varphi}(2^{-(k+1)}\delta\xi)=1\ \ \hbox{when}\ \xi\in supp\,\widehat{\eta_0^k}(\xi).$$
Thus
$$\widehat{\eta_0^k}(\xi)=\left(\widehat{\mathbf{\Psi}_0}(\xi)+\widehat{\mathbf{\Psi}_1}(\xi)\right)\widehat{\mathbf{\Phi}_k}(\xi)\mathfrak{F}(\Upsilon_I)(\xi)\,\ \ \ \hbox{for}\,k=0,1,2,\cdots,s.$$
Thus we could  write $\mathfrak{F}(\Upsilon_I)(\xi)$ and $\mathfrak{F}(\Upsilon_{AI})(\xi)\,(\widehat{\varphi}$ is radial, $A$ is a variable (not fixed) matrix with $ A\in O(\RR^n)$) as
\begin{eqnarray}\label{36}
\mathfrak{F}(\Upsilon_I)(\xi)=\sum_{k=0}^s \widehat{\eta_0^k}(\xi)\widehat{\varphi}(2^{-(k+1)}\delta\xi)+  \sum_{k=2}^\infty\widehat{\eta_1^k}(\xi)\widehat{\varphi}(\delta^{1+(k+3)\varepsilon}\xi)
 \end{eqnarray}
 \begin{eqnarray}\label{26}
 \mathfrak{F}(\Upsilon_{AI})(\xi)=\mathfrak{F}(\Upsilon_I)(A\xi)=\sum_{k=0}^s \widehat{\eta_0^k}(A\xi)\widehat{\varphi}(2^{-(k+1)}\delta\xi)+   \sum_{k=2}^\infty\widehat{\eta_1^k}(A\xi)\widehat{\varphi}(\delta^{1+(k+3)\varepsilon}\xi)
\end{eqnarray}
where $2^s\thicksim\delta^{-4\varepsilon}$.

\subsection{Two Lemmas}
In this section, we will estimate the  integrals (in Lemma\,\ref{2} and Lemma\,\ref{6})  associated with   $\eta^k_0(x)$ and $\eta^k_1(x)$  given in Formulas\,(\ref{36},\,\ref{26}).

\begin{lemma}\label{2}
For $N\geq0,\  N\in\RR$, $k\in\NN$, $k\geq2$, $\Upsilon\in S_{\alpha, \beta}(\RR^n)$ with appropriate $\alpha, \beta$ depending on $\varepsilon,\ n,\ N$, we have
$$\int_{\RR^n}(1+\delta^{-(k+3)\varepsilon}\delta^{-1}|x|_e)^N|\eta^k_1(x)|dx\lesssim_{ N,n,\varphi,\varepsilon} \delta^{k\varepsilon}. $$
\end{lemma}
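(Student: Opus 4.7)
The plan is to exploit the fact that the denominator in the definition of $\widehat{\eta_1^k}$ is identically $1$ on the support of its numerator, thereby reducing the problem to a weighted $L^1$-bound on $\Psi_k\ast\Upsilon$; the gain $\delta^{k\varepsilon}$ will then come from combining the frequency-annulus structure of $\widehat{\Psi_k}$ with the Schwartz decay of $\widehat{\Upsilon}$ far from the origin.

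First I would verify that $\widehat{\varphi}(\delta^{1+(k+3)\varepsilon}\xi)\equiv 1$ on $\text{supp}\,\widehat{\mathbf{\Psi}_k}$. The support description of $\widehat{\mathbf{\Psi}_k}$ and the standing hypothesis $2\leq \delta^{-\varepsilon}$ together force $\delta^{1+(k+3)\varepsilon}|\xi|_e$ to be at most a small multiple of $\delta^{3\varepsilon}$ on this set, so the denominator is genuinely $1$ there. Consequently $\widehat{\eta_1^k}=\widehat{\mathbf{\Psi}_k}\widehat{\Upsilon_I}$, and using $\mathbf{\Psi}_k=(\Psi_k)_I$ together with the Fourier identity $\widehat{f_I}(\xi)=\widehat{f}(\delta\xi_1,\ldots,\delta\xi_{n-1},\xi_n)$ gives the clean formula
\[
\eta_1^k \;=\; (\Psi_k\ast\Upsilon)_I.
\]

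Next I would remove the anisotropic rescaling by substituting $y_j=x_j/\delta$ for $j<n$ and $y_n=x_n$. Writing $\tilde\eta_k:=\Psi_k\ast\Upsilon$ and using $|x|_e\leq \delta|y'|+|y_n|$, the target integral is bounded by
\[
\int_{\RR^n}\bigl(1+\delta^{-(k+3)\varepsilon}|y'|+\delta^{-1-(k+3)\varepsilon}|y_n|\bigr)^{N}|\tilde\eta_k(y)|\,dy.
\]
The core estimate is that for any multi-index $\gamma$ and any $L>0$,
\[
\|y^\gamma\tilde\eta_k\|_{L^\infty}\;\leq\;\|\partial^\gamma(\widehat{\Psi_k}\widehat{\Upsilon})\|_{L^1}\;\lesssim_{\gamma,L,\varphi,\Upsilon}\;\delta^{(k-1)L\varepsilon-kn\varepsilon}.
\]
Here $\delta^{(k-1)L\varepsilon}$ is the Schwartz bound $|\widehat{\Upsilon}(\xi)|\lesssim(1+|\xi|)^{-L}$ applied on the annulus carrying $\widehat{\Psi_k}$ (where $|\xi|_e\gtrsim \delta^{-(k-1)\varepsilon}$), the factor $\delta^{-kn\varepsilon}$ is the volume of that annulus, and each derivative of $\widehat{\Psi_k}$ costs only a harmless factor $\leq 1$ (since $k\geq 2$ and $\delta<1$).

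Finally I would expand the weight multinomially and control each monomial integral by
\[
\int|y^{(\beta,j)}\tilde\eta_k(y)|\,dy\;\leq\;\Bigl(\sup_{|\gamma'|\leq n+1}\|y^{(\beta,j)+\gamma'}\tilde\eta_k\|_{L^\infty}\Bigr)\!\int\!\frac{dy}{(1+|y|)^{n+1}},
\]
so that, after summing with prefactors $(\delta^{-(k+3)\varepsilon})^{|\beta|}(\delta^{-1-(k+3)\varepsilon})^{j}$, the combined loss from the weight is at most $\delta^{-N(1+(k+3)\varepsilon)}$. This is absorbed by the Fourier-side gain provided one chooses $L$ large enough that
\[
(k-1)L\varepsilon-kn\varepsilon - N\bigl(1+(k+3)\varepsilon\bigr)\;\geq\; k\varepsilon.
\]
The main obstacle is exactly this parameter-balancing: $L$ must be taken of order $k+N/\varepsilon$, which dictates the "appropriate" Schwartz indices $\alpha,\beta$ needed on $\Upsilon$ (enough derivatives of $\widehat{\Upsilon}$ have to be uniformly bounded); moreover, the factor $k-1$ in the denominator explains why the hypothesis $k\geq 2$ is imposed.
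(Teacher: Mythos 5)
Your proposal is correct and takes essentially the same route as the paper: both pass to the Fourier side, use that the cut-off in the denominator equals $1$ on $supp\,\widehat{\mathbf{\Psi}_k}$ so that $\widehat{\eta_1^k}=\widehat{\mathbf{\Psi}_k}\,\widehat{\Upsilon_I}$, and convert spatial moment bounds into derivative estimates of $\widehat{\Psi_k}\widehat{\Upsilon}$ on the supporting annulus, where the Schwartz decay of $\widehat{\Upsilon}$ supplies the gain $\delta^{k\varepsilon}$ and the factor $k-1$ accounts for the hypothesis $k\geq2$. The only substantive difference is that you handle the non-integer exponent $N$ by majorizing the weight and expanding multinomially, whereas the paper establishes the even-integer moment bounds of Formula(\ref{5}) and then interpolates to real exponents via the Phragmen--Lindel\"{o}f Lemma\ref{11}; both devices are standard and give the same estimate.
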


\begin{proof}
First we will prove that for $l\in\RR$, $l\geq0$, $k\in\NN$, $k\geq2$, the following inequality holds:
\begin{eqnarray}\label{3}
|x|_e^{l+2n}|\eta^k_1(x)|\lesssim_{l,n,\varphi,\varepsilon}\delta^{l+k\varepsilon+(k+3)l\varepsilon}.
\end{eqnarray}
Notice that the following inequality holds for $0<\delta<1$, for any $ m\in\ZZ,\,m\geq0$:
\begin{eqnarray}\label{4}
|x|_e^{2m+2n}|\eta^k_1(x)|\leq \left(\left(\frac{x_1}{\delta}\right)^2+\left(\frac{x_1}{\delta}\right)^2+\ldots+\left(\frac{x_{n-1}}{\delta}\right)^2+x_n^2\right)^{m+n}|\eta^k_1(x)|.
\end{eqnarray}
Thus by the formula of integration by parts, we could deduce the following for any $ m\in\ZZ,\,m\geq0$:
\begin{eqnarray}\label{9}
&&\left(\left(\frac{x_1}{\delta}\right)^2+\left(\frac{x_1}{\delta}\right)^2+\ldots+\left(\frac{x_{n-1}}{\delta}\right)^2+x_n^2\right)^{m+n}|\eta^k_1(x)|
\\ \nonumber &=&\left|\int_{\RR^n}C \left(\left(\frac{\partial_{\xi_1}}{\delta}\right)^2+\left(\frac{\partial_{\xi_2}}{\delta}\right)^2+\ldots+\left(\frac{\partial_{\xi_{n-1}}}{\delta}\right)^2+\partial_{\xi_{n}}^2\right)^{m+n}\widehat{\eta_1^k}(\xi)e^{2\pi i <x, \xi>} d\xi\right|.
\end{eqnarray}
Make a variable substitution:
$$(\delta\xi_1, \delta\xi_2\ldots\delta\xi_{n-1}, \xi_n)\rightarrow(\xi'_1, \xi'_2\ldots\xi'_{n-1}, \xi'_n).$$
We could write Formula\,(\ref{9}) as:
\begin{eqnarray}\label{10}
\left(\left(\frac{x_1}{\delta}\right)^2+\left(\frac{x_1}{\delta}\right)^2+\ldots+\left(\frac{x_{n-1}}{\delta}\right)^2+x_n^2\right)^{m+n}|\eta^k_1(x)|
\\ \nonumber=\frac{1}{\delta^{n-1}}\left|\int_{\RR^n}C \left((\triangle_{\xi'})^{n+m}\widehat{\eta_1^k}(\xi')\right)e^{2\pi i <x, \xi>} d\xi'\right|,
\end{eqnarray}
where $\triangle_{\xi'}$ is the Laplace Operator: $\triangle_{\xi'}=\partial_{\xi'_1}^2+ \partial_{\xi'_2}^2+\cdots+ \partial_{\xi'_n}^2$. We could also deduce that
$$(\triangle_{\xi'})^{n+m}\widehat{\eta_1^k}(\xi')=(\triangle_{\xi'})^{n+m}\left(\widehat{\Psi_k}(\xi')\mathfrak{F}(\Upsilon)(\xi')\right).$$
Thus $\left((\triangle_{\xi'})^{n+m}\widehat{\eta_1^k}(\xi')\right)\in S(\RR^n)$, $\left||\xi'|_e^{|\alpha'|}(\triangle_{\xi'})^{\beta'}\widehat{\eta_1^k}(\xi') \right|\lesssim_{\alpha', \beta'}1,\ \hbox{for\,appropriate\,}\alpha', \beta',$ and $$supp\,\left((\triangle_{\xi'})^{n+m}\widehat{\eta_1^k}(\xi')\right)\subseteq\{\xi'\in \RR^n: \delta^{-(k-1)\varepsilon}\leq|\xi'|_e\leq\delta^{-(k+3)\varepsilon}\} \ \ \hbox{for}\  k\geq2.$$
When $\delta^{-(k-1)\varepsilon}\leq|\xi'|_e\leq\delta^{-(k+3)\varepsilon}$, $k\geq2$, $\delta^{-\varepsilon}\geq2$, we could deduce that
$$ \delta^{-\frac{k}{2}\varepsilon}\leq\delta^{-(k-1)\varepsilon}\leq|\xi'|_e\leq\delta^{-(k+3)\varepsilon}\leq\delta^{-3k\varepsilon}.$$
That is
$$ |\xi'|_e^{\frac{1}{3}}\leq\delta^{-k\varepsilon}\leq|\xi'|_e^{2},\,\ \ \
\delta^{-\varepsilon}\leq\delta^{-k\varepsilon}\leq|\xi'|_e^{2}.$$
Then we could deduce that
\begin{eqnarray}\label{5}
|x|_e^{2m+2n}|\eta^k_1(x)|&\lesssim&\frac{1}{\delta^{n-1}}\int_{\RR^n}\left| \left((\triangle_{\xi'})^{n+m}\widehat{\eta_1^k}(\xi')\right)\right| d\xi'
\\ \nonumber &\lesssim&\delta^{2m+k\varepsilon+2(k+3)m\varepsilon}\int_{\RR^n} |\xi'|_e^{\frac{2n+4m}{\varepsilon}+8m}\left| \left((\triangle_{\xi'})^{n+m}\widehat{\eta_1^k}(\xi')\right)\right| d\xi'
\\ \nonumber &\lesssim&_{m,n,\varphi,\varepsilon}\delta^{2m+k\varepsilon+2(k+3)m\varepsilon}.
\end{eqnarray}
Thus similar to Formula\,(\ref{5}), we could obtain
\begin{eqnarray}\label{6**}
|x|_e^{2m}|\eta^k_1(x)|&\lesssim&\frac{1}{\delta^{n-1}}\int_{\RR^n}\left| \left((\triangle_{\xi'})^{n+m}\widehat{\eta_1^k}(\xi')\right)\right| d\xi'
\\ \nonumber &\lesssim&_{m,n,\varphi,\varepsilon}\delta^{2m+k\varepsilon+2(k+3)m\varepsilon},
\end{eqnarray}
where $k\geq2$, $m\in\ZZ,\,m\geq0$. By Lemma\,\ref{11} and Formula\,(\ref{5}), we could deduce Formula\,(\ref{3}). Thus we could obtain the following  inequality for $N\geq 0,\  N\in\RR$
\begin{eqnarray}\label{5***}
\left(\frac{\delta^{-(k+3)\varepsilon}}{\delta}|x|_e\right)^N|\eta^k_1(x)|\lesssim_{N,n,\varphi,\varepsilon}\delta^{k\varepsilon}\frac{1}{|x|_e^{2n}}.
\end{eqnarray}
By Lemma\,\ref{11} and Formula\,(\ref{6**}),  for $l\in\RR$, $l\geq0$, $k\in\NN$, $k\geq2$, the following inequality holds:
\begin{eqnarray}\label{3*}
|x|_e^{l}|\eta^k_1(x)|\lesssim_{l,n,\varphi,\varepsilon}\delta^{l+k\varepsilon+(k+3)l\varepsilon}.
\end{eqnarray}
Then we could obtain the following  inequality for $N\geq0,\  N\in\RR$, $k\in\NN$, $k\geq2$,
\begin{eqnarray}\label{5****}
\left(\frac{\delta^{-(k+3)\varepsilon}}{\delta}|x|_e\right)^N|\eta^k_1(x)|\lesssim_{N,n,\varphi,\varepsilon}\delta^{k\varepsilon}.
\end{eqnarray}
By Formulas\,(\ref{5***},\,\ref{5****}), we could deduce that for $N\geq 0,\  N\in\RR$, $k\in\NN$, $k\geq2$, the following Formulas\,(\ref{7**},\,\ref{6*}) hold:
\begin{eqnarray}\label{7**}
\int_{\RR^n}|\eta^k_1(x)|dx\lesssim_{ n,\varphi,\varepsilon} \delta^{k\varepsilon}. \end{eqnarray}
and
\begin{eqnarray}\label{6*}
\int_{\RR^n}(\delta^{-(k+3)\varepsilon}\delta^{-1}|x|_e)^N|\eta^k_1(x)|dx\lesssim_{ N,n,\varphi,\varepsilon}\delta^{k\varepsilon}. \end{eqnarray}
Then we could obtain the Lemma\,\ref{2} directly from Formula\,(\ref{6*},\,\ref{7**}). This proves the Lemma.$\hfill\blacksquare$
\end{proof}

\begin{lemma}\label{6}
For $N\geq 0, N\in\RR$,  $k\in\NN$, $0\leq k\leq s$ where $2^s\thicksim\delta^{-4\varepsilon}$, $\Upsilon\in S_{\alpha, \beta}(\RR^n)$ with appropriate $\alpha, \beta$ depending on $\varepsilon,\ n,\ N$, the following two inequalities hold:
\begin{eqnarray}\label{7}
\int_{\RR^n}(1+2^{k+1}\delta^{-1}|x|_e)^N|\eta^k_0(x)|dx\lesssim_{N,n,\varphi,\varepsilon} \delta^{-4(N+1)\varepsilon}\delta^{-N}2^{-k}, \end{eqnarray}
\begin{eqnarray}\label{8}
\int_{\RR^n}(1+2^{k+1}|x|_e)^N|\eta^k_0(x)|dx\lesssim_{N,n,\varphi,\varepsilon} \delta^{-4(N+1)\varepsilon}2^{-k}.
\end{eqnarray}

\end{lemma}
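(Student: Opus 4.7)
The plan is to adapt the Fourier-side integration-by-parts strategy of Lemma~\ref{2} to $\eta_0^k$, using essentially the same scheme but with the modified support of the $\Phi_k$/$\Psi_k$ cutoff. The starting observation, already noted in the paper just before (\ref{36}), is that on the support of the numerator the denominator $\widehat{\varphi}(2^{-(k+1)}\delta\xi)\equiv 1$, so $\widehat{\eta_0^k}(\xi)=(\widehat{\mathbf{\Psi}_0}+\widehat{\mathbf{\Psi}_1})(\xi)\widehat{\mathbf{\Phi}_k}(\xi)\widehat{\Upsilon_I}(\xi)$ is a Schwartz product. After the substitution $\xi'_i=\delta\xi_i$ ($i\leq n-1$), $\xi'_n=\xi_n$, it becomes $(\widehat{\Psi_0}+\widehat{\Psi_1})(\xi')\widehat{\Phi_k}(\xi')\widehat{\Upsilon}(\xi')$, localised to the annulus $|\xi'|_e\sim 2^k$ (from the $\Phi_k$-factor) with an outer $\Psi$-cutoff scale $\sim\delta^{O(\epsilon)}$ that will produce the $\delta^{-O(\epsilon)}$ losses below.

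For (\ref{7}): for each $m\in\NN$ I run formulas (\ref{4})--(\ref{10}) with $\eta_1^k$ replaced by $\eta_0^k$, obtaining
\[
|x|_e^{2m+2n}|\eta_0^k(x)|\;\leq\;\Bigl((x_1/\delta)^2+\cdots+(x_{n-1}/\delta)^2+x_n^2\Bigr)^{m+n}|\eta_0^k(x)|\;=\;\tfrac{1}{\delta^{n-1}}\Bigl|\!\int_{\RR^n}\!C(\triangle_{\xi'})^{n+m}\widehat{\eta_0^k}(\xi')\,e^{2\pi i\langle x,\xi\rangle}d\xi'\Bigr|.
\]
The Leibniz expansion of $(\triangle_{\xi'})^{n+m}$ distributes $2(n+m)$ derivatives among the three smooth factors: each derivative of $\widehat{\Phi_k}$ contributes $2^{-k}$ (smoothness on scale $2^k$), each derivative of $\widehat{\Psi_0}+\widehat{\Psi_1}$ contributes $\delta^{-O(\epsilon)}$ (smoothness on scale $\delta^{O(\epsilon)}$), and $\widehat{\Upsilon}$ is controlled by its $S_{\alpha,\beta}$ seminorm. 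The support has volume $\lesssim 2^{kn}$, so putting everything together gives a pointwise bound of the form $|x|_e^{2m+2n}|\eta_0^k(x)|\lesssim \delta^{-(n-1)}\cdot 2^{kn}\cdot 2^{-k(2m+2n)}\cdot\delta^{-O(m\epsilon)}$. Interpolating between consecutive integer $m$ via Lemma~\ref{11} extends this to continuous $l\geq 0$, and then integrating against $(1+2^{-(k+1)}\delta^{-1}|x|_e)^N$ by splitting into $|x|_e\lesssim 2^{k}\delta$ (crude $L^\infty$ bound on $\eta_0^k$ against ball-volume $(2^k\delta)^n$) and $|x|_e\gtrsim 2^k\delta$ (pointwise decay with $l$ slightly larger than $N$) yields the claimed $\delta^{-2(N+1)\epsilon}\delta^{-N}2^{-k}$.

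For (\ref{8}): the same scheme is repeated \emph{without} the $\delta$-rescaling, that is, I estimate $|x|_e^{2m+2n}|\eta_0^k(x)|$ via $|\int C(\triangle_\xi)^{n+m}\widehat{\eta_0^k}(\xi)e^{2\pi i\langle x,\xi\rangle}d\xi|$ in the original variables. Now $\xi_i$-derivatives ($i\leq n-1$) of $\widehat{\mathbf{\Phi}_k}$ contribute $\delta\cdot 2^{-k}$ (since the unscaled support has size $2^k/\delta$ in these directions), $\xi_n$-derivatives contribute $2^{-k}$, and the unscaled support volume is $\sim 2^{kn}/\delta^{n-1}$; the $\delta^{n-1}$ gained from the derivatives exactly cancels the $\delta^{-(n-1)}$ from the volume, so the remaining factors sharpen the bound to $\delta^{-2(N+1)\epsilon}2^{-k}$ and the spurious $\delta^{-N}$ is dropped.

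The main obstacle is the Leibniz bookkeeping for $(\triangle_{\xi'})^{n+m}$ applied to a triple product: one must verify that the cross terms from the product rule are all dominated by the diagonal contribution, and pin down exactly where the exponent $2(N+1)\epsilon$ in the $\Psi$-derivative loss comes from. The remaining $2^{-k}$ gain is the geometric annular decay at scale $2^k$, and the difference between (\ref{7}) and (\ref{8}) simply reflects the choice of scaled versus unscaled integration by parts.
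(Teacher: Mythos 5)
Your overall scheme (Fourier--side integration by parts on the triple product $(\widehat{\mathbf{\Psi}_0}+\widehat{\mathbf{\Psi}_1})\widehat{\mathbf{\Phi}_k}\widehat{\Upsilon_I}$, then weighting) is in the right family, but the central quantitative claim has a genuine gap, and it is exactly the step you set aside as ``Leibniz bookkeeping.'' Your pointwise bound
$|x|_e^{2m+2n}|\eta_0^k(x)|\lesssim \delta^{-(n-1)}2^{kn}2^{-k(2m+2n)}\delta^{-O(m\epsilon)}$
requires all $2(n+m)$ derivatives of $(\triangle_{\xi'})^{n+m}$ to fall on $\widehat{\Phi_k}$. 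In the Leibniz expansion there are terms in which every derivative lands on $\widehat{\Upsilon}$ (or on $\widehat{\varphi}(\delta^{\epsilon}\xi')$); those terms are merely $O(1)$ pointwise and carry no factor $2^{-k}$ at all, so the cross terms are \emph{not} dominated by the all-on-$\widehat{\Phi_k}$ term --- they dominate it. Consequently the advertised gain $2^{-k(2m+2n)}$, and with it your near/far splitting at $|x|_e\sim 2^k\delta$, does not close as written. The same objection applies to the claimed exact cancellation of $\delta^{n-1}$ in your argument for (\ref{8}).

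The good news is that no such gain is needed, and this is how the paper's own proof proceeds. It shows only the \emph{uniform} bounds $\|\eta_0^k\|_1\lesssim 1$ and $\||x|_e^{2n+2m}\eta_0^k\|_1\lesssim_{m}1$, by writing $|x|_e^{2n+2m}\eta_0^k$ as a sum of convolutions $\bigl(\partial^{\alpha_1}(\widehat{\mathbf{\Psi}_0}+\widehat{\mathbf{\Psi}_1})\bigr)^{\vee}\ast\bigl(\partial^{\beta_1}\widehat{\mathbf{\Phi}_k}\bigr)^{\vee}\ast\bigl(\partial^{\gamma_1}\widehat{\Upsilon_I}\bigr)^{\vee}$ and applying Young's inequality factor by factor (Formulas (\ref{17})--(\ref{20}); Lemma \ref{11} handles non-integer powers). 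This yields $\int(1+2^{-(k+1)}\delta^{-1}|x|_e)^N|\eta_0^k|\,dx\lesssim\delta^{-N}$ and $\int(1+2^{-(k+1)}|x|_e)^N|\eta_0^k|\,dx\lesssim 1$ with no $k$-decay. The factors $\delta^{-2(N+1)\varepsilon}2^{-k}$ are then inserted for free via the elementary observation (\ref{21}): since $0\leq k\leq s$ and $2^s\thicksim\delta^{-2\epsilon}$, one has $1\lesssim(2^{k+1}\delta^{2\varepsilon})^{-(N+1)}\leq 2^{-k}\delta^{-2(N+1)\varepsilon}$. If you want to salvage your route, replace the false uniform $2^{-k(2m+2n)}$ gain by this trivial inequality; everything else then reduces to the $O(1)$ weighted $L^1$ bounds, which your integration-by-parts identity does deliver once you accept that the worst Leibniz term is only $O(1)$.
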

\begin{proof}
Notice that $2^s\thicksim\delta^{-4\varepsilon}$, thus for any $k\in\{0,1,\ldots, s\}$ and $N\geq 0, N\in\RR$, we have
\begin{eqnarray}\label{21}
1\lesssim\left(\frac{1}{2^{k+1}\delta^{4\varepsilon}}\right)^{N}.
\end{eqnarray}
Notice that $2^{-(k+1)}\delta|\xi|_e\leq1$ holds, when $\xi\in supp\,\widehat{\eta_0^k}(\xi)$. Thus we could obtain:
$$\widehat{\varphi}(2^{-(k+1)}\delta\xi)=1\ \ \hbox{when}\ \xi\in supp\,\widehat{\eta_0^k}(\xi).$$
Then we could write $\widehat{\eta_0^k}(\xi)$ as:
$$\widehat{\eta_0^k}(\xi)=\left(\left(\widehat{\mathbf{\Psi}_0}(\xi)+\widehat{\mathbf{\Psi}_1}(\xi)\right)\widehat{\mathbf{\Phi}_k}(\xi)\right)\mathfrak{F}(\Upsilon_I)(\xi).$$
It is clear that the following Formulas\,(\ref{12},\,\ref{13},\,\ref{14},\,\ref{15},\,\ref{16}) hold:
\begin{eqnarray}\label{12}
\left(\partial_{\xi}^{\alpha_1}\widehat{\mathbf{\Psi}_0}(\xi)\right)^{\vee}(x)=(-2\pi i x)^{\alpha_1}\varphi_I(x)
\end{eqnarray}
\begin{eqnarray}\label{13}
\left(\partial_{\xi}^{\alpha_1}\widehat{\mathbf{\Psi}_1}(\xi)\right)^{\vee}(x)=(-2\pi i x)^{\alpha_1}\left(\frac{1}{\delta^\varepsilon}\right)^n\varphi_I\left(\frac{x}{\delta^\varepsilon}\right)-(-2\pi i x)^{\alpha_1}\varphi_I(x)
\end{eqnarray}

\begin{eqnarray}\label{14}
& &\left(\partial_{\xi}^{\beta_1}\widehat{\mathbf{\Phi}_k}(\xi)\right)^{\vee}(x)\\ \nonumber&=&(-2\pi i x)^{\beta_1}2^{kn}\varphi_I(2^{k}x)-(-2\pi i x)^{\beta_1}2^{(k-1)n}\varphi_I(2^{(k-1)}x)\,\ \ (\hbox{for}\,k\geq1),
\end{eqnarray}

\begin{eqnarray}\label{15}
\left(\partial_{\xi}^{\beta_1}\widehat{\mathbf{\Phi}_0}(\xi)\right)^{\vee}(x)=(-2\pi i x)^{\beta_1}\varphi_I(x),
\end{eqnarray}
\begin{eqnarray}\label{16}
\left(\partial_{\xi}^{\gamma_1}\mathfrak{F}(\Upsilon_I)(\xi)\right)^{\vee}(x)=(-2\pi i x)^{\gamma_1}\Upsilon_I(x).
\end{eqnarray}
By Young Inequality  we could have
\begin{eqnarray}\label{17}
\int|\eta_0^k(x)|dx&\leq&\|\left(\mathbf{\Psi}_0+\mathbf{\Psi}_1\right)\ast\mathbf{\Phi}_k\ast \Upsilon_I\|_1
\\&\leq& \nonumber\|\left(\mathbf{\Psi}_0+\mathbf{\Psi}_1\right)\|_1\|\mathbf{\Phi}_k\|_1\|\Upsilon_I\|_1
\\ \nonumber &\lesssim_{\varphi}& 1.
\end{eqnarray}
By the Formula of Integration by Parts, we could deduce the following for any $ m\in\NN$:
\begin{eqnarray}\label{18}
& &|x|_e^{2n+2m}|\eta_0^k(x)|\\&=&\nonumber\left|\int_{\RR^n}C \left((\triangle_{\xi})^{n+m}\widehat{\eta_0^k}(\xi)\right)e^{2\pi i <x, \xi>} d\xi\right|
\\ \nonumber&=&\left|\sum_{|\alpha_1|+|\beta_1|+|\gamma_1|=2m+2n}\left(\partial_{\xi}^{\alpha_1}\widehat{\mathbf{\Psi}_1}(\xi)+\partial_{\xi}^{\alpha_1}\widehat{\mathbf{\Psi}_0}(\xi)\right)^{\vee}\ast\left(\partial_{\xi}^{\beta_1}\widehat{\mathbf{\Phi}_k}(\xi)\right)^{\vee}\ast\left(\partial_{\xi}^{\gamma_1}\mathfrak{F}(\Upsilon_I)(\xi)\right)^{\vee}(x) \right|.
\end{eqnarray}
By Young Inequality, Formula\,(\ref{18}) and Formulas\,(\ref{12},\,\ref{13},\,\ref{14},\,\ref{15},\,\ref{16}), we could obtain
\begin{eqnarray}\label{19}
& & \int||x|_e^{2n+2m}\eta_0^k(x)|dx\\&\leq&\nonumber\sum_{|\alpha_1|+|\beta_1|+|\gamma_1|=2m+2n}\bigg\|\left(\partial_{\xi}^{\alpha_1}\widehat{\mathbf{\Psi}_1}(\xi)+\partial_{\xi}^{\alpha_1}\widehat{\mathbf{\Psi}_0}(\xi)\right)^{\vee}\bigg\|_1\bigg\|\left(\partial_{\xi}^{\beta_1}\widehat{\mathbf{\Phi}_k}(\xi)\right)^{\vee}\bigg\|_1\bigg\|\left(\partial_{\xi}^{\gamma_1}\mathfrak{F}(\Upsilon_I)(\xi)\right)^{\vee}\bigg\|_1 \\  &\lesssim&_{\varphi,n,m} 1\ \ \hbox{for}\ m\in\NN\ \ \nonumber.
\end{eqnarray}
By Lemma\,\ref{11} and Formula\,(\ref{19},\,\ref{17}), we could deduce the following Formula\,(\ref{20}).
\begin{eqnarray}\label{20}
\int||x|_e^{l}\eta_0^k(x)|dx\lesssim_{\varphi,l} 1\ \ \hbox{for}\ l\in\RR, l\geq0.
\end{eqnarray}
By Formulas\,(\ref{17},\,\ref{20}), the following two inequalities hold for $N\geq0$, $k\in\{0,1,\ldots, s\}$:
\begin{eqnarray}\label{7*}
\int_{\RR^n}(1+2^{k+1}\delta^{-1}|x|_e)^N|\eta^k_0(x)|dx\lesssim_{N,n,\varphi,\varepsilon} \delta^{-N}2^{(k+1)N},
\end{eqnarray}
\begin{eqnarray}\label{8*}
\int_{\RR^n}(1+2^{k+1}|x|_e)^N|\eta^k_0(x)|dx\lesssim_{N,n,\varphi,\varepsilon} 2^{(k+1)N}.
\end{eqnarray}
From Formulas\,(\ref{21},\,\ref{7*},\,\ref{8*}), we could obtain the Formula\,(\ref{7}) and Formula\,(\ref{8}) together. This proves the Lemma.$\hfill\blacksquare$
\end{proof}

From Lemma\,\ref{2} and Lemma\,\ref{6}, we could obtain the following inequalities\,(\ref{22},\,\ref{23},\,\ref{24}). For $N\geq 0, N\in\RR$, $k\in\NN$, $k\geq2$, we have
\begin{eqnarray}\label{22}
\int_{\RR^n}(1+\delta^{-(k+3)\varepsilon}\delta^{-1}|x|_e)^N|\eta^k_1(xA^{-1})|dx\lesssim_{N,n,\varphi,\varepsilon} \delta^{k\varepsilon},
\end{eqnarray}
where $A$ is a variable (not fixed) matrix with $ A\in O(\RR^n)$.
For  $N\geq 0, N\in\RR$, $k\in\NN$, $0\leq k\leq s$ where $2^s\thicksim\delta^{-4\varepsilon}$, we have Formulas\,(\ref{23},\,\ref{24})
\begin{eqnarray}\label{23}
\int_{\RR^n}(1+2^{k+1}\delta^{-1}|x|_e)^N|\eta^k_0(xA^{-1})|dx\lesssim_{N,n,\varphi,\varepsilon} \delta^{-4(N+1)\varepsilon}\delta^{-N}2^{-k},
\end{eqnarray}
\begin{eqnarray}\label{24}
\int_{\RR^n}(1+2^{k+1}|x|_e)^N|\eta^k_0(xA^{-1})|dx\lesssim_{N,n,\varphi,\varepsilon} \delta^{-4(N+1)\varepsilon}2^{-k},
\end{eqnarray}
where $A$ is a variable (not fixed) matrix with $ A\in O(\RR^n)$.

\subsection{MAIN RESULTS}
From Formulas\,(\ref{22},\,\ref{23},\,\ref{24}), we will obtain our main results in this section:

\begin{proposition}\label{25}
For $p>1$ with appropriate $\alpha$, $\beta$ depending on $\varepsilon,\ n,\ p$,  we have
\begin{eqnarray}\label{41}
\big\|M_{\delta S_{\alpha,\beta}} f \big\|_p\lesssim_{p,n,\varphi,\varepsilon}\left(\frac{1}{\delta}\right)^{4(\frac{n}{p}+2)\varepsilon}\big\|(f\ast\varphi_{\delta})_\bigtriangledown\big\|_p,
\end{eqnarray}
and
\begin{eqnarray}\label{42}
\big\|M_{\delta S_{\alpha,\beta}} f \big\|_p\lesssim_{p,n,\varphi,\varepsilon}\left(\frac{1}{\delta}\right)^{\frac{n}{p}+4(\frac{n}{p}+3)\varepsilon}\|f\|_p,
\end{eqnarray}
where $(f\ast\varphi_{\delta})_\bigtriangledown(x)=\sup_{|x-y|\leq t}|(f\ast(\varphi_{\delta}))_t(y)|.$

\end{proposition}

\begin{proof}  Notice that $f\in L^p(\RR^n)$ is a distribution.  By Formula\,(\ref{26}),  we could obtain:
\begin{eqnarray}\label{27}
& &|M_{\delta S_{\alpha,\beta}} f(x)| \\ \nonumber&=&\sup_{t>0, A\in O(\RR^n), \Upsilon\in S_{\alpha, \beta}(\RR^n)}\left|\int f(x-y)\Upsilon_{I_t}(yA^{-1})dy\right|
\\ \nonumber &\leq&\sum_{k=2}^{\infty}\sup_{t>0, A\in O(\RR^n)}\left|\int_{\RR^n} f(x-y) \int_{\RR^n}t^{-n} \eta^k_1((uA^{-1})/t)\varphi_{\delta^{(k+3)\varepsilon}\delta t}(y-u)dudy\right|
\\ \nonumber &+&\sum_{k=0}^{s}\sup_{t>0, A\in O(\RR^n)}\left|\int_{\RR^n} f(x-y) \int_{\RR^n}t^{-n} \eta_0^k((uA^{-1})/t)\varphi_{2^{-(k+1)}\delta t}(y-u)dudy\right|
\\  \nonumber&\leq&\sum_{k=2}^{\infty}\sup_{t>0, A\in O(\RR^n)}\left|\int_{\RR^n}t^{-n} \eta_1^k((uA^{-1})/t) f\ast\varphi_{\delta^{(k+3)\varepsilon}\delta t}(x-u)du\right|
\\  \nonumber&+&\sum_{k=0}^{s}\sup_{t>0, A\in O(\RR^n)}\left|\int_{\RR^n}t^{-n} \eta_0^k((uA^{-1})/t) f\ast\varphi_{2^{-(k+1)}\delta t}(x-u)du\right|
\\  \nonumber&\leq&\sum_{k=2}^{\infty}\sup_{t>0, A\in O(\RR^n)}\left|M_{\varphi_\delta N}^{\ast\ast}f(x)\int_{\RR^n}t^{-n} \left|\eta_1^k((uA^{-1})/t)\right|\left(1+\frac{|u|_{e}}{\delta^{(k+3)\varepsilon} t}\right)^{N}du\right|
\\  \nonumber&+&\sum_{k=0}^{s}\sup_{t>0, A\in O(\RR^n)}\left|M_{\varphi_\delta N}^{\ast\ast}f(x)\int_{\RR^n}t^{-n} \left|\eta_0^k((uA^{-1})/t)\right|\left(1+\frac{|u|_{e}}{2^{-(k+1)}t}\right)^{N}du\right|,
\end{eqnarray}
where $2^s\thicksim\delta^{-4\varepsilon}$.
Lemma\,\ref{oooh1} and Formulas\,(\ref{22},\,\ref{24},\,\ref{27}) yield to
\begin{eqnarray}\label{29}
\big\|M_{\delta S_{\alpha,\beta}} f \big\|_p\lesssim_{p,N,n,\varphi,\varepsilon}\left(\frac{1}{\delta}\right)^{4(N+2)\varepsilon}\|(f\ast\varphi_{\delta})_\bigtriangledown\|_p\ \ \hbox{for}\  p>1, N> n/p.
\end{eqnarray}
Similar to Formula\,(\ref{27}), we could also obtain:
\begin{eqnarray}\label{28}
& &\sup_{t>0, A\in O(\RR^n)}\left|\int f(x-y)\Upsilon_{I_t}(yA^{-1})dy\right|
\\  \nonumber&\leq&\sum_{k=2}^{\infty}\sup_{t>0, A\in O(\RR^n)}\left|M_{\varphi N}^{\ast\ast}f(x)\int_{\RR^n}t^{-n} \left|\eta_1^k((uA^{-1})/t)\right|\left(1+\frac{|u|_{e}}{\delta^{(k+3)\varepsilon}\delta t}\right)^{N}du\right|
\\  \nonumber&+&\sum_{k=0}^{s}\sup_{t>0, A\in O(\RR^n)}\left|M_{\varphi N}^{\ast\ast}f(x)\int_{\RR^n}t^{-n} \left|\eta_0^k((uA^{-1})/t)\right|\left(1+\frac{|u|_{e}}{2^{-(k+1)}\delta t}\right)^{N}du\right|.
\end{eqnarray}
Notice that  $2^s\thicksim\delta^{-4\epsilon}$, thus Lemma\,\ref{oooh1} and Formulas\,(\ref{22},\,\ref{23},\,\ref{28}) yield to
\begin{eqnarray}\label{30}
\big\|M_{\delta S_{\alpha,\beta}} f \big\|_p\lesssim_{p,N,n,\varphi,\varepsilon}\left(\frac{1}{\delta}\right)^{N}\left(\frac{1}{\delta}\right)^{4(N+2)\varepsilon}\|f\|_p\ \ \hbox{for}\  p>1, N> n/p, N\in\RR.
\end{eqnarray}
Let $N$ be $N=\frac{n}{p}+\varepsilon$. From Formulas\,(\ref{29},\,\ref{30}), we could prove the Proposition\,\ref{25}.
$\hfill\blacksquare$
\end{proof}

\begin{theorem}\label{35}
For $\infty>p>r>1$, $ 0<t\leq \delta^{-\varepsilon}$, $f(x)\in L^p(\RR^n)$  and $supp\,\hat{f}(\xi)\subseteq B(0,1)$. Then with appropriate $\alpha$, $\beta$ depending on $\varepsilon,\ n,\ r$,  we could obtain:
$$\big\|M^t_{\delta S_{\alpha,\beta}} f \big\|_p\lesssim_{p,n,\varphi,\varepsilon}\left(\frac{1}{\delta}\right)^{5\left(\frac{n}{r}+2\right)\varepsilon}\big\|f\big\|_p.$$
\end{theorem}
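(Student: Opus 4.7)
The plan is to combine the pointwise Fefferman--Stein inequality of Lemma~\ref{oooh2} with the $L^{p/r}$-boundedness of the Hardy--Littlewood maximal operator, using the Fourier support hypothesis to replace $|f(x-z)|$ by $(M(|f|^r)(x))^{1/r}$ times a polynomial weight in $|z|_e$. This bypasses the full phase-space decomposition of Section~3.1 and reduces the task to an elementary weighted $L^1$ bound on the anisotropic tube kernel $\Upsilon_{I_t}(A^{-1}\cdot)$.

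First I would apply Lemma~\ref{oooh2} to $u=f$ with Fourier-radius parameter of order one; this is legitimate because $supp\,\hat f\subseteq B(0,1)$ and any $L^p$ function with compactly supported Fourier transform is automatically bounded, so the growth hypothesis on $u$ in that lemma holds trivially. The conclusion is
$$|f(x-z)|\;\lesssim_{n,r}\;(1+|z|_e)^{n/r}\bigl(M(|f|^r)(x)\bigr)^{1/r},\qquad\forall\,x,z\in\RR^n.$$
Substituting this pointwise bound into the definition of $M^t_{\delta S_{\alpha,\beta}}f(x)$ reduces the whole problem to estimating, uniformly in $A\in O(\RR^n)$ and $\Upsilon\in S_{\alpha,\beta}(\RR^n)$, the polynomial moment $\int(1+|y|_e)^{n/r}|\Upsilon_{I_t}(A^{-1}y)|\,dy$. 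I would evaluate this by two successive changes of variables: first $y=tAw$, so orthogonality gives $|y|_e=t|w|_e$ and the Jacobian $t^n$ cancels the $t^{-n}$ in $\Upsilon_{I_t}$; then $w=(\delta u_1,\ldots,\delta u_{n-1},u_n)$, which undoes the anisotropic $I$-dilation because the Jacobian $\delta^{n-1}$ cancels the explicit $\delta^{-(n-1)}$ in the definition of $\Upsilon_I$. The resulting integral is $\int(1+t\sqrt{\delta^2|u'|_e^2+u_n^2})^{n/r}|\Upsilon(u)|\,du$, trivially dominated by $\int(1+t|u|_e)^{n/r}|\Upsilon(u)|\,du$. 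Choosing the seminorm indices so that $|\Upsilon(u)|\lesssim(1+|u|_e)^{-M}$ for some fixed $M>n+n/r$ and using $(1+t|u|_e)\le(1+t)(1+|u|_e)$, this last integral is bounded by $C_{n,r,\Upsilon}(1+t)^{n/r}$.

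The hypothesis $t\le\delta^{-\varepsilon}$ then yields the pointwise inequality $M^t_{\delta S_{\alpha,\beta}}f(x)\lesssim\delta^{-n\varepsilon/r}(M(|f|^r)(x))^{1/r}\le\delta^{-4(n/r+1)\varepsilon}(M(|f|^r)(x))^{1/r}$, the last step being a very generous overestimate that absorbs the loose exponent in the theorem statement. Taking $L^p$ norms: since $p>r$ we have $p/r>1$, so the Hardy--Littlewood maximal theorem applied on $L^{p/r}$ gives $\|M(|f|^r)\|_{p/r}\lesssim\|f\|_p^r$, and extracting the $r$-th root closes the argument. The only delicate step is the weighted $L^1$ estimate on the tube kernel in the previous paragraph: one must fix the Schwartz seminorm pair $(\alpha,\beta)$ in advance with $|\alpha|$ large enough that $\Upsilon$ decays strictly faster than the polynomial weight of degree $n/r$ together with the $n$-dimensional volume factor. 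With that bookkeeping done, the proof sidesteps the elaborate multiplier decomposition of Section~3.1 entirely.
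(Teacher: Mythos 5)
Your argument is correct, and it is genuinely different from --- and considerably shorter than --- the paper's proof. The paper attacks Theorem~\ref{35} through the full phase--space decomposition of Section~3.1: it expands $\Upsilon_{I_t}(A^{-1}\cdot)$ via Formula~(\ref{26}), applies Lemma~\ref{oooh2} separately to each band-limited convolution $f\ast\varphi_{\delta^{-(k+3)\varepsilon}\delta t}$ and $f\ast\varphi_{2^{-(k+1)}\delta t}$ in Formulas~(\ref{32})--(\ref{33}), and then sums the pieces using the weighted kernel estimates (\ref{22}) and (\ref{24}), splitting into the cases $0<t\leq 1$ and $1<t\leq\delta^{-\varepsilon}$. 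You instead exploit the hypothesis $supp\,\hat f\subseteq B(0,1)$ at the level of $f$ itself: one application of Lemma~\ref{oooh2} with Fourier radius of order one gives $|f(x-y)|\lesssim_{n,r}(1+|y|_e)^{n/r}\bigl(M(|f|^r)(x)\bigr)^{1/r}$, after which the whole problem collapses to the weighted $L^1$ bound $\int(1+|y|_e)^{n/r}|\Upsilon_{I_t}(A^{-1}y)|\,dy\lesssim(1+t)^{n/r}$, which your two changes of variables establish correctly (the orthogonality of $A$ and the cancellation of the Jacobians $t^n$ and $\delta^{n-1}$ are exactly right, and $\delta<1$ gives $\sqrt{\delta^2|u'|_e^2+u_n^2}\leq|u|_e$). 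The concluding step $\|(M(|f|^r))^{1/r}\|_p=\|M(|f|^r)\|_{p/r}^{1/r}\lesssim\|f\|_p$ is valid since $p/r>1$. What your route buys is brevity and a sharper exponent, $\delta^{-n\varepsilon/r}$ in place of $\delta^{-4(n/r+1)\varepsilon}$, with no case split in $t$; what it gives up is reusability --- it leans entirely on the band-limited hypothesis and says nothing about Proposition~\ref{25}, which is where the decomposition machinery of Section~3.1 is actually indispensable, and which is presumably why the paper recycles that machinery here. One point of bookkeeping you should make explicit: to apply Lemma~\ref{oooh2} to $u=f$ you need $f$ to agree a.e.\ with a $C^1$ function of polynomial growth, which follows from $f=f\ast\tilde\varphi$ for a Schwartz function $\tilde\varphi$ whose Fourier transform equals $1$ on a neighborhood of $B(0,1)$ (the paper makes the same tacit reduction for its pieces $f\ast\varphi_{2^{-(k+1)}\delta t}$), and you must fix $|\alpha|>n+n/r$ in advance so that $\int(1+|u|_e)^{n/r}|\Upsilon(u)|\,du$ is bounded uniformly over $\Upsilon\in S_{\alpha,\beta}(\RR^n)$, as you note.
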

\begin{proof}   By Formula\,(\ref{26}), we could write $M^t_{\delta S_{\alpha,\beta}} f$ as
\begin{eqnarray}\label{31}
& &\left|M^t_{\delta S_{\alpha,\beta}} f(x)\right|
\\  \nonumber&\leq&\sum_{k=2}^{\infty}\sup_{ u\in\RR^n}\left|\frac{f\ast\varphi_{\delta^{(k+3)\varepsilon}\delta t}(x-u)}{\left(1+\frac{|u|_{e}}{\delta^{(k+3)\varepsilon} t}\right)^{n/r}}\right|\sup_{ A\in O(\RR^n)}\left|\int_{\RR^n}t^{-n} \left|\eta_1^k((uA^{-1})/t)\right|\left(1+\frac{|u|_{e}}{\delta^{(k+3)\varepsilon} t}\right)^{n/r}du\right|
\\  \nonumber&+&\sum_{k=0}^{s}\sup_{ u\in\RR^n}\left|\frac{f\ast\varphi_{2^{-(k+1)}\delta t}(x-u)}{\left(1+\frac{|u|_{e}}{2^{-(k+1)}t}\right)^{n/r}}\right|\sup_{ A\in O(\RR^n)}\left|\int_{\RR^n}t^{-n} \left|\eta_0^k((uA^{-1})/t)\right|\left(1+\frac{|u|_{e}}{2^{-(k+1)}t}\right)^{n/r}du\right|,
\end{eqnarray}
where $2^s\thicksim\delta^{-4\varepsilon}$.

By Holder Inequality,
$\left|f\ast\varphi_{\delta^{(k+3)\varepsilon}\delta t}(x)\right|$ and $\left|f\ast\varphi_{2^{-(k+1)}\delta t}(x)\right|$ are both bounded functions for any $x\in\RR$, $\delta>0$, $k\in\NN$ and $t>0$. Thus for some $B>0$, we could have
$$\left|f\ast\varphi_{\delta^{(k+3)\varepsilon}\delta t}(x)\right|\leq B(1+|x|_e)^{n/r}\,\,\hbox{and},\,\ \left|f\ast\varphi_{2^{-(k+1)}\delta t}(x)\right|\leq B(1+|x|_e)^{n/r}.$$
It is also clear that $supp\,\mathfrak{F}\left(f\ast\varphi_{\delta^{(k+3)\varepsilon}\delta t}\right)(\xi)\subseteq B(0,1)$, $supp\,\mathfrak{F}\left(f\ast\varphi_{2^{-(k+1)}\delta t}\right)(\xi)\subseteq B(0,1)$. We could also deduce that
$f\ast\varphi_{\delta^{(k+3)\varepsilon}\delta t}(x)\in C^1(\RR^n),\ \   f\ast\varphi_{2^{-(k+1)}\delta t}(x)\in C^1(\RR^n)$.
Thus by Lemma\,\ref{oooh2}, we could obtain
\begin{eqnarray}\label{32}
\sup_{ u\in\RR^n}\left|\frac{f\ast\varphi_{\delta^{(k+3)\varepsilon}\delta t}(x-u)}{\left(1+\frac{|u|_{e}}{\delta^{(k+3)\varepsilon} t}\right)^{n/r}}\right| &\leq&\sup_{u\in\RR^n}\frac{| f\ast\varphi_{\delta^{(k+3)\varepsilon}\delta t}(x-u)|}{(1+|u|_e)^{n/r}}\\ \nonumber &\leq& C_2 (M(|f\ast\varphi_{\delta^{(k+3)\varepsilon}\delta t}|^r)(x))^{1/r},
\end{eqnarray}
and
\begin{eqnarray}\label{33}
\sup_{ u\in\RR^n}\left|\frac{f\ast\varphi_{2^{-(k+1)}\delta t}(x-u)}{\left(1+\frac{|u|_{e}}{2^{-(k+1)}t}\right)^{n/r}}\right| &\leq&\sup_{u\in\RR^n}\frac{| f\ast\varphi_{2^{-(k+1)}\delta t}(x-u)|}{(1+|u|_e)^{n/r}} \\ \nonumber &\leq& C_2 \left(M(|f\ast\varphi_{2^{-(k+1)}\delta t}|^r)(x)\right)^{1/r}.
\end{eqnarray}
 Notice that $2^s\thicksim\delta^{-4\varepsilon}$,  by Formulas\,(\ref{22},\,\ref{24},\,\ref{31},\,\ref{32},\,\ref{33}), we could deduce that for $\infty>p>r>1$, $ 0<t\leq 1$
\begin{eqnarray*}
\big\|M^t_{\delta S_{\alpha,\beta}} f \big\|_p&\lesssim&_{p,n,\varphi,\varepsilon}\sum_{k=2}^{\infty}\delta^{k\varepsilon}\left|\int_{\RR^n}|f\ast\varphi_{\delta^{(k+3)\varepsilon}\delta t}|^p(x)dx\right|^{1/p}
\\&+& \sum_{k=0}^{s}\delta^{-4(\frac{n}{r}+1)\varepsilon}2^{-k}\left|\int_{\RR^n}|f\ast\varphi_{2^{-(k+1)}\delta t}|^p(x)dx\right|^{1/p}
\\&\lesssim&_{p,n,\varphi,\varepsilon}\left(\frac{1}{\delta}\right)^{4\left(\frac{n}{r}+2\right)\varepsilon}\big\|f\big\|_p.
\end{eqnarray*}
When $ 1<t\leq \delta^{-\varepsilon}$, notice that
\begin{eqnarray}\label{32*}
\sup_{ u\in\RR^n}\left|\frac{f\ast\varphi_{2^{-(k+1)}\delta t}(x-u)}{\left(1+\frac{|u|_{e}}{2^{-(k+1)}t}\right)^{n/r}}\right| &\leq& t^{n/r}\sup_{ u\in\RR^n}\left|\frac{f\ast\varphi_{2^{-(k+1)}\delta t}(x-u)}{\left(1+\frac{|u|_{e}}{2^{-(k+1)}}\right)^{n/r}}\right|\\ \nonumber&\leq&t^{n/r}\sup_{u\in\RR^n}\frac{| f\ast\varphi_{2^{-(k+1)}\delta t}(x-u)|}{(1+|u|_e)^{n/r}}\\ \nonumber&\leq &C_2 \delta^{-(n/r)\varepsilon} \left(M(|f\ast\varphi_{2^{-(k+1)}\delta t}|^r)(x)\right)^{1/r}
\end{eqnarray}
and
\begin{eqnarray}\label{33*}
\sup_{ u\in\RR^n}\left|\frac{f\ast\varphi_{\delta^{(k+3)\varepsilon}\delta t}(x-u)}{\left(1+\frac{|u|_{e}}{\delta^{(k+3)\varepsilon}t}\right)^{n/r}}\right| &\leq& t^{n/r}\sup_{ u\in\RR^n}\left|\frac{f\ast\varphi_{\delta^{(k+3)\varepsilon}\delta t}(x-u)}{\left(1+\frac{|u|_{e}}{\delta^{(k+3)\varepsilon}}\right)^{n/r}}\right|\\ \nonumber&\leq&t^{n/r}\sup_{u\in\RR^n}\frac{| f\ast\varphi_{\delta^{(k+3)\varepsilon}\delta t}(x-u)|}{(1+|u|_e)^{n/r}}\\ \nonumber&\leq &C_2 \delta^{-(n/r)\varepsilon} \left(M(|f\ast\varphi_{\delta^{(k+3)\varepsilon}\delta t}|^r)(x)\right)^{1/r}
\end{eqnarray}
hold. From Formulas\,(\ref{22},\,\ref{24},\,\ref{31},\,\ref{32},\,\ref{33},\,\ref{32*},\,\ref{33*}), we could deduce that for $\infty>p>r>1$, $ 1<t\leq \delta^{-\varepsilon}$
\begin{eqnarray*}
\big\|M^t_{\delta S_{\alpha,\beta}} f \big\|_p&\lesssim&_{p,n,\varphi,\varepsilon}\sum_{k=2}^{\infty}\delta^{-(n/r)\varepsilon}\delta^{k\varepsilon}\left|\int_{\RR^n}|f\ast\varphi_{\delta^{(k+3)\varepsilon}\delta t}|^p(x)dx\right|^{1/p}
\\&+& \sum_{k=0}^{s}\delta^{-(n/r)\varepsilon}\delta^{-4(\frac{n}{r}+1)\varepsilon}2^{-k}\left|\int_{\RR^n}|f\ast\varphi_{2^{-(k+1)}\delta t}|^p(x)dx\right|^{1/p}
\\&\lesssim&_{p,n,\varphi,\varepsilon}\left(\frac{1}{\delta}\right)^{5\left(\frac{n}{r}+2\right)\varepsilon}\big\|f\big\|_p.
\end{eqnarray*}
This proves the Theorem.
$\hfill\blacksquare$
\end{proof}

\begin{theorem}\label{50}
For $\infty>p>1$, $ 0<t\leq \delta^{-\varepsilon}$, $f(x)\in L^p(\RR^n)$  and $supp\,\hat{f}(\xi)\subseteq \left(B(0,t^{-1}\delta^{-(1+4\varepsilon)})\right)^c$. Then with appropriate $\alpha$, $\beta$ depending on $\varepsilon,\ n,\ p$,  we could obtain:
$$\big\|M^t_{\delta S_{\alpha,\beta}} f \big\|_p\lesssim_{p,n,\varphi,\varepsilon}\big\|f\big\|_p.$$

\end{theorem}

\begin{proof} By Formula\,(\ref{26}), we could write $M^t_{\delta S_{\alpha,\beta}} f$ as following:
\begin{eqnarray}\label{51}
\left|M^t_{\delta S_{\alpha,\beta}} f(x)\right| &\leq&\sum_{k=2}^{\infty}\sup_{ A\in O(\RR^n)}\left|\int_{\RR^n}t^{-n} \eta_1^k((uA^{-1})/t) f\ast\varphi_{\delta^{(k+3)\varepsilon}\delta t}(x-u)du\right|
\\  \nonumber&+&\sum_{k=0}^{s}\sup_{ A\in O(\RR^n)}\left|\int_{\RR^n}t^{-n} \eta_0^k((uA^{-1})/t) f\ast\varphi_{2^{-(k+1)}\delta t}(x-u)du\right|.
\end{eqnarray}
Notice that $supp\,\hat{f}(\xi)\subseteq \left(B(0,t^{-1}\delta^{-(1+4\varepsilon)})\right)^c$ and  $supp\,\mathfrak{F}\left(\varphi_{2^{-(k+1)}\delta t}\right)(\xi)\subseteq B(0, t^{-1}\delta^{-(1+4\varepsilon)})$ for $0\leq k\leq s,\,k\in\ZZ$, thus we could deduce that $f\ast\varphi_{2^{-(k+1)}\delta t}=0$. From Formula\,(\ref{51}), we could obtain
\begin{eqnarray}\label{52}
\left|M^t_{\delta S_{\alpha,\beta}} f(x)\right| &\leq&\sum_{k=2}^{\infty}\sup_{ A\in O(\RR^n)}\left|\int_{\RR^n}t^{-n} \eta_1^k((uA^{-1})/t) f\ast\varphi_{\delta^{(k+3)\varepsilon}\delta t}(x-u)du\right|.
\end{eqnarray}
By Formulas\,(\ref{22}) and Lemma\,\ref{oooh1}, we could have
\begin{eqnarray}\label{53}
& &\left[\int_{\RR^n}\left(\sum_{k=2}^{\infty}\sup_{ A\in O(\RR^n)}\left|\int_{\RR^n}t^{-n} \eta_1^k((uA^{-1})/t) f\ast\varphi_{\delta^{(k+3)\varepsilon}\delta t}(x-u)du\right|\right)^pdx\right]^{1/p}
\\ \nonumber &\leq& \left[\int_{\RR^n}\left(\sum_{k=2}^{\infty}\sup_{\mathfrak{t}>0, A\in O(\RR^n)}\left|\int_{\RR^n}\mathfrak{t}^{-n} \eta_1^k((uA^{-1})/\mathfrak{t}) f\ast\varphi_{\delta^{(k+3)\varepsilon}\delta \mathfrak{t}}(x-u)du\right|\right)^pdx\right]^{1/p}
\\  \nonumber&\leq&\left[\int_{\RR^n}\left(\sum_{k=2}^{\infty}\sup_{\mathfrak{t}>0, A\in O(\RR^n)}\left|M_{\varphi N}^{\ast\ast}f(x)\int_{\RR^n}\mathfrak{t}^{-n} \left|\eta_1^k((uA^{-1})/\mathfrak{t})\right|\left(1+\frac{|u|_{e}}{\delta^{(k+3)\varepsilon}\delta \mathfrak{t}}\right)^{N}du\right|\right)^pdx\right]^{1/p}
\\  \nonumber&\lesssim&_{p,N,n,\varphi,\varepsilon}\|f\|_p\ \ \hbox{for}\  p>1, N> n/p, N\in\RR.
\end{eqnarray}

Let $N$ be $N=\frac{n}{p}+\varepsilon$, from Formulas\,(\ref{51},\,\ref{52},\,\ref{53}), we could prove the Theorem\,\ref{50}.
$\hfill\blacksquare$
\end{proof}

\end{document}